\newtheorem{theorem}{Theorem}[section]
\newtheorem{lemma}[theorem]{Lemma}
\newtheorem{proposition}[theorem]{Proposition}
\newtheorem{corollary}[theorem]{Corollary}
\theoremstyle{definition}
\newtheorem{remark}[theorem]{Remark}
\def\cocoa{{\hbox{\rm C\kern-.13em o\kern-.07em C\kern-.13em o\kern-.15em A}}}
\begin{document}

\title[Cocharacters of representations of $\mathfrak{sl}_2(\mathbb{C})$]
{Bound for the cocharacters of the  identities of irreducible representations of $\mathfrak{sl}_2(\mathbb{C})$}

\author[M\'aty\'as Domokos]
{M\'aty\'as Domokos}
\address{Alfr\'ed R\'enyi Institute of Mathematics,
Re\'altanoda utca 13-15, 1053 Budapest, Hungary}
\email{domokos.matyas@renyi.hu}

\thanks{Partially supported by the Hungarian National Research, Development and Innovation Office,  NKFIH K 138828,  K 132002.}

\subjclass[2010]{16R30; 16R10; 17B01; 17B20;  20C30.}

\keywords{weak polynomial identities, simple Lie algebra, irreducible representation, cocharacter sequence}

\maketitle

\newenvironment{dedication}
        {\vspace{6ex}\begin{quotation}\begin{center}\begin{em}}
        {\par\end{em}\end{center}\end{quotation}}
\begin{dedication}
{Dedicated to Vesselin Drensky on his $70$th birthday}
\end{dedication}

\begin{abstract}For each irreducible finite dimensional representation of the Lie algebra $\mathfrak{sl}_2(\mathbb{C})$ of $2\times 2$ traceless matrices, an explicit uniform upper bound is given for the multiplicities in the cocharacter sequence of the polynomial identities satisfied by the given representation. 
\end{abstract}

\section{Introduction} \label{sec:intro}

Let $\rho:\mathfrak{g}\to \mathfrak{gl}(V)$ be a finite dimensional representation of the Lie algebra $\mathfrak{g}$ over a field $K$ of characteristic zero;  that is, $\mathfrak{gl}(V)=\mathrm{End}_K(V)$, the space of all $K$-linear transformations of the finite dimensional $K$-vector space $V$, viewed as a Lie algebra with Lie product $[A,B]:=A\circ B-B\circ A$ for $A,B\in \mathrm{End}_K(V)$, and $\rho$ is a 
homomorphism of Lie algebras. Denote by $F_m:=K\langle x_1,\dots,x_m\rangle$ the free associative $K$-algebra with $m$ generators. Consider $F_m$ a subalgebra of $F_{m+1}$ in the obvious way, and write 
$F:=\bigcup_{m=1}^\infty F_m$ for the free associative algebra of countable rank. 
We say that \emph{$f=0$ is an identity of the representation $\rho$ of $\mathfrak{g}$} (or briefly, of the pair $(\mathfrak{g},\rho)$) for some $f\in F_m$ if for any elements 
$A_1,\dots,A_m\in \mathfrak{g}$ we have the following equality in the associative $K$-algebra $\mathrm{End}_K(V)$: 
\[f(\rho(A_1),\dots,\rho(A_n))=0\in  \mathrm{End}_K(V).\] 
Note that an identity of the representation $\rho$ of the Lie algebra $\mathfrak{g}$ is also called in the literature a \emph{weak polynomial identity} for the pair $(\mathrm{End}_K(V),\rho(\mathfrak{g}))$.  
This notion was introduced and powerfully applied  first by Razmyslov \cite{Razmyslov:1, Razmyslov:2, Razmyslov:3, Razmyslov:4}
(see Drensky \cite{drensky} for a recent survey on weak polynomial identities).  
Set 
\[I(\mathfrak{g},\rho):=\{f\in F\mid f=0\text{ is an identity of }(\mathfrak{g},\rho)\}.\] 
Clearly $I(\mathfrak{g},\rho)$ is an ideal in $F$ stable with respect to all $K$-algebra endomorphisms of $F$ of the form $x_i\mapsto u_i$, where $u_i$ for $i=1,2,\dots$ is an element of the Lie subalgebra of $F$ generated by $x_1,x_2,\dots$. 
In particular, the general linear group $\mathrm{GL}_m(K)$ acts on $F_m$ via $K$-algebra automorphisms: for $g=(g_{ij})_{i,j=1}^m$ we have $g\cdot x_j=\sum_{i=1}^mg_{ij}x_i$, and $I(\mathfrak{g},\rho)\cap F_m$ is a $\mathrm{GL}_m(K)$-invariant subspace of $F_m$. 
The \emph{multilinear component} of $F_m$ is 
\[P_m:=\mathrm{Span}_K\{x_{\pi(1)}\cdots x_{\pi(m)}\mid \pi\in S_m\},\] 
where $S_m$ is the symmetric group of degree $m$. 
It is well known that when $\mathrm{char}(K)=0$, the ideal 
$I(\mathfrak{g},\rho)$ is determined by the multilinear components $I(\mathfrak{g},\rho)\cap P_m$, $m=1,2,\dots$. 
Identifying $S_m$ with the subgroup of permutation matrices in $\mathrm{GL}_m(K)$ we get its action on $F_m$ via $K$-algebra automorphisms (more explicitly, $\pi\in S_m$ is the automorphism of $F_m$ given by $x_i\mapsto x_{\pi(i)}$), and 
the subspaces $P_m$ and  $I(\mathfrak{g},\rho)\cap P_m$ are $S_m$-invariant. 
Define the \emph{$m$th cocharacter of $(\mathfrak{g},\rho)$} as 
\[\chi_m(\mathfrak{g},\rho):=\text{ the character of the }S_m\text{-module }P_m/(I(\mathfrak{g},\rho)\cap P_m).\] 
We call   
\[\chi(\mathfrak{g},\rho):=(\chi_m(\mathfrak{g},\rho)\mid m=1,2,\dots)\] 
the \emph{cocharacter sequence of} $(\mathfrak{g},\rho)$. 
The irreducible $S_m$-modules are labeled by partitions of $m$; let $\chi^{\lambda}$ denote the character of the irreducible 
$S_m$-module associated to the partition $\lambda=(\lambda_1,\dots,\lambda_m)\vdash m$. We have   
\[\chi_m(\mathfrak{g},\rho)=\sum_{\lambda\vdash m}\mathrm{mult}_{\lambda}(\mathfrak{g},\rho)\chi^{\lambda},\]
and we are interested in the multiplicities $\mathrm{mult}_{\lambda}(\mathfrak{g},\rho)$ of the irreducible $S_m$-characters in the 
cocharacter sequence. 
Note that the value of $\chi_m(\mathfrak{g},\rho)$ on the identity element of $S_m$ is 
\[c_m(\mathfrak{g},\rho):=\dim_K(P_m/(I(\mathfrak{g},\rho)\cap P_m)),\] 
and 
\[(c_m(\mathfrak{g},\rho)\mid m=1,2,\dots)\] 
is called the \emph{codimension sequence of} 
$(\mathfrak{g},\rho)$. It was proved by Gordienko \cite{gordienko} 
that $\lim_{m\to\infty}\root m\of{c_m(\mathfrak{g},\rho)}$ exists and is an integer.  
As is  observed in \cite[Example 3]{gordienko}, an obvious upper bound for 
$c_m(\mathfrak{g},\rho)$ can be obtained from the fact that there is a natural $K$-linear embedding 
\begin{equation}\label{eq:embedding}
P_m/(I(\mathfrak{g},\rho)\cap P_m)\hookrightarrow 
 \mathrm{Hom}_K(\rho(\mathfrak{g})^{\otimes m}, \mathrm{End}_K(V)). 
\end{equation}
Our starting observation is that the adjoint representation  of $\mathfrak{g}$ on itself  induces a natural representation of $\mathfrak{g}$ on $\rho(\mathfrak{g})^{\otimes m}$ 
(the $m$th tensor power of $\rho(\mathfrak{g})$) and on $ \mathrm{End}_K(V)$,  such that 
the image of the embedding \eqref{eq:embedding} is contained in the subspace of $\mathfrak{g}$-module homomorphisms from $\rho(\mathfrak{g})^{\otimes m}$ to $\mathrm{End}_K(V)$. So 
\eqref{eq:embedding} can be refined as 
\begin{equation}\label{eq:2embedding} 
P_m/(I(\mathfrak{g},\rho)\cap P_m)\hookrightarrow 
 \mathrm{Hom}_{\mathfrak{g}}(\rho(\mathfrak{g})^{\otimes m}, \mathrm{End}_K(V)). 
\end{equation}

This will be used to give an upper bound for the multiplicities in the cocharacter sequence 
$\chi(\frak{sl}_2(\mathbb{C}),\rho^{(d)})$ of the $d$-dimensional irreducible representation
\[\rho^{(d)}:\mathfrak{sl}_2(\mathbb{C})\to \mathfrak{gl}(\mathbb{C}^{d})=\mathbb{C}^{d\times d}\]  
of $\mathfrak{sl}_2(\mathbb{C})$  
for $d=1,2,\dots$. Note that throughout the paper we shall identify $\mathfrak{gl}(\mathbb{C}^d)$ with 
the associative algebra $\mathbb{C}^{d\times d}$ of $d\times d$ complex matrices, viewed as a Lie algebra with Lie bracket $[A,B]=AB-BA$. 

\begin{theorem}\label{thm:main} 
The multiplicity $\mathrm{mult}_{\lambda}(\frak{sl}_2(\mathbb{C}),\rho^{(d)})$ in  
$\chi(\frak{sl}_2(\mathbb{C}),\rho^{(d)})$ is non-zero only if $\lambda=(\lambda_1,\lambda_2,\lambda_3)$ (i.e. $\lambda$ has at most $3$ non-zero parts), 
and in this case we have the inequality 
\[\mathrm{mult}_{\lambda}(\frak{sl}_2(\mathbb{C}),\rho^{(d)})\le 3^{d-2}.\] 
\end{theorem}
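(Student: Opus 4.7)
The plan is to deduce both assertions from the embedding \eqref{eq:2embedding} applied to $(\mathfrak{g},\rho) = (\mathfrak{sl}_2(\mathbb{C}),\rho^{(d)})$; we may assume $d \geq 2$. Let $V(n)$ denote the $(n+1)$-dimensional irreducible $\mathfrak{sl}_2(\mathbb{C})$-module. The faithful image $\rho^{(d)}(\mathfrak{sl}_2(\mathbb{C}))$ is the $3$-dimensional adjoint module $V(2)$, and Clebsch--Gordan gives
\[\mathrm{End}_\mathbb{C}(\mathbb{C}^d) \cong V(d-1) \otimes V(d-1) \cong \bigoplus_{k=0}^{d-1} V(2k).\]
Schur--Weyl duality for $\mathrm{GL}(V(2)) = \mathrm{GL}_3(\mathbb{C})$ decomposes
\[V(2)^{\otimes m} \cong \bigoplus_{\lambda \vdash m,\ \ell(\lambda) \leq 3} S^\lambda \otimes L_\lambda\]
as an $S_m \times \mathrm{GL}_3(\mathbb{C})$-module, where $L_\lambda$ is the irreducible polynomial $\mathrm{GL}_3(\mathbb{C})$-module of highest weight $\lambda$ and $S^\lambda$ is the corresponding Specht $S_m$-module. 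The adjoint action of $\mathfrak{sl}_2(\mathbb{C})$ on $V(2) \cong \mathfrak{sl}_2(\mathbb{C})$ identifies $\mathfrak{sl}_2(\mathbb{C})$ with $\mathfrak{so}_3(\mathbb{C}) \subset \mathfrak{sl}_3(\mathbb{C})$, which is a principal $\mathfrak{sl}_2$-subalgebra (its $h$-element acting on $V(2)$ with eigenvalues $2, 0, -2$). Comparing $S_m$-isotypic components in \eqref{eq:2embedding} therefore gives
\[\mathrm{mult}_\lambda(\mathfrak{sl}_2(\mathbb{C}),\rho^{(d)}) \leq \dim \mathrm{Hom}_{\mathfrak{sl}_2(\mathbb{C})}\bigl(L_\lambda|_{\mathfrak{sl}_2(\mathbb{C})},\, \mathrm{End}_\mathbb{C}(\mathbb{C}^d)\bigr),\]
and the right-hand side vanishes whenever $\ell(\lambda) > 3$, which is the first assertion.

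For the inequality I would decompose $L_\lambda|_{\mathfrak{sl}_2(\mathbb{C})} \cong \bigoplus_{k \geq 0} N_k \cdot V(2k)$; only even highest weights appear, since the principal $h$ takes even integer values on every weight space of $L_\lambda$. Combined with the decomposition of $\mathrm{End}_\mathbb{C}(\mathbb{C}^d)$ above, the Hom-space dimension becomes $\sum_{k=0}^{d-1} N_k$. Each summand $V(2k)$ contributes exactly $1$ to the $h$-weight-zero subspace $(L_\lambda)_0$, and contributes $1$ to the $h$-weight-$2d$ subspace $(L_\lambda)_{2d}$ precisely when $k \geq d$; hence
\[\sum_{k=0}^{d-1} N_k \;=\; \dim(L_\lambda)_0 - \dim(L_\lambda)_{2d},\]
a difference of two coefficients of the principal specialization $s_\lambda(q^2, 1, q^{-2})$.

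The main obstacle I anticipate is to prove the uniform bound $\dim(L_\lambda)_0 - \dim(L_\lambda)_{2d} \leq 3^{d-2}$ independently of $\lambda$. By the hook--content formula this specialization admits the closed form
\[s_\lambda(q^2, 1, q^{-2}) = q^{-2(\lambda_1 - \lambda_3)} \prod_{(i,j) \in \lambda} \frac{1 - q^{2(3 + c(i,j))}}{1 - q^{2h(i,j)}},\]
so the assertion reduces to a coefficient estimate on this product. The appearance of $3^{d-2} = (\dim V(2))^{d-2}$ suggests a $(d-2)$-step argument in which each step contributes a factor of $3$---plausibly an induction on $d$ exploiting the Clebsch--Gordan rule $V(2) \otimes V(2k) = V(2k-2) \oplus V(2k) \oplus V(2k+2)$, or an explicit injection of the Hom-space into a universal $3^{d-2}$-dimensional object built from iterated adjoint commutators acting on $\mathrm{End}_\mathbb{C}(\mathbb{C}^d)$.
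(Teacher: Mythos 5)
Your reduction is sound as far as it goes, and it travels a genuinely different road from the paper: you work directly with the $\mathfrak{g}$-equivariant embedding \eqref{eq:2embedding}, apply Schur--Weyl duality for $\mathrm{GL}_3(\mathbb{C})$ acting on $V(2)^{\otimes m}$, and restrict $L_\lambda$ along the principal $\mathfrak{sl}_2\cong\mathfrak{so}_3\subset\mathfrak{sl}_3$. This correctly yields the first assertion ($\lambda$ has at most $3$ parts), and your identity $\sum_{k=0}^{d-1}N_k=\dim(L_\lambda)_0-\dim(L_\lambda)_{2d}$ is correct. The paper instead embeds $P_m/(I\cap P_m)$ into the multilinear adjoint invariants $\mathcal{O}[\mathfrak{sl}_2(\mathbb{C})^{m+d-1}]^{\mathfrak{sl}_2(\mathbb{C})}_{(1^{m+d-1})}$ via $f\mapsto\mathrm{Tr}\bigl(f(\rho(A_1),\dots,\rho(A_m))\rho(A_{m+1})\cdots\rho(A_{m+d-1})\bigr)$ (injectivity of this map is exactly where Lemma~\ref{lemma:products of length d} is needed), then uses Procesi's description of the invariants and the branching rule to reduce the bound to counting standard skew tableaux $\mu/\lambda$ with $\ell(\mu)\le 3$ and all parts of $\mu$ of equal parity. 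The two bounds are in fact comparable: the paper's $r(\lambda)$ equals $\sum_k N_k m_k$ with $m_k$ the multiplicity of $V(2k)$ in $V(2)^{\otimes(d-1)}$, and since $m_k\ge 1$ for $0\le k\le d-1$ (for $d\ge3$), your quantity $\sum_{k=0}^{d-1}N_k$ is at most $r(\lambda)$; so your intermediate bound is, if anything, sharper.

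The genuine gap is that you never prove the inequality $\dim(L_\lambda)_0-\dim(L_\lambda)_{2d}\le 3^{d-2}$; you explicitly flag it as ``the main obstacle'' and offer only speculative directions (a hook--content specialization, an unspecified induction on $d$, an unspecified injection). This inequality is the entire quantitative content of the theorem, and it is not routine: $\dim(L_\lambda)_0$ is a sum of Kostka numbers that is unbounded in $\lambda$ (already for $d=2$ and $\lambda=(4,2,0)$ one has $\dim(L_\lambda)_0=5$ and $\dim(L_\lambda)_4=4$, so the bound $3^0=1$ holds only through exact cancellation), so a uniform-in-$\lambda$ estimate on the difference requires a real argument. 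The paper's substitute for this step is the elementary count of Corollary~\ref{cor:3^d}: a standard skew tableau of shape $\mu/\lambda$ with $\ell(\mu)\le3$ is encoded as a word in $\{1,2,3\}^{d-1}$, and the parity condition on $\mu$ forces the last two letters into a set of at most $3$ of the $9$ possible pairs, giving $3^{d-3}\cdot3=3^{d-2}$. Your proposed Clebsch--Gordan induction on $d$ does not obviously close, since the quantity $\dim(L_\lambda)_0-\dim(L_\lambda)_{2d}$ is not visibly controlled by its value at $d-1$ times $3$; the most direct repair of your route is to bound $\sum_{k=0}^{d-1}N_k$ by $\dim\mathrm{Hom}_{\mathfrak{so}_3}(L_\lambda,V(2)^{\otimes(d-1)})$ and then carry out essentially the paper's tableau count. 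As written, the theorem's inequality remains unproved.
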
 

\begin{remark} \label{remark:1} (i) The exact values of $\mathrm{mult}_{\lambda}(\frak{sl}_2(\mathbb{C}),\rho^{(d)})$ are known for $d\le 3$. For $d=1$ all the multiplicities are obviously zero.  
It was proved in \cite{Procesi:2} (see also \cite[Exercise 12.6.12]{Drensky:2}) that 
\[\mathrm{mult}_{\lambda}(\frak{sl}_2(\mathbb{C}),\rho^{(2)})= 1 \text{ for all }\lambda=(\lambda_1,\lambda_2,\lambda_3). \]
The multiplicities $\mathrm{mult}_{\lambda}(\frak{sl}_2(\mathbb{C}),\rho^{(3)})$ are computed in 
\cite[Theorem 3.7, Proposition 3.8]{DD}. It turns out that 
$\mathrm{mult}_{\lambda}(\frak{sl}_2(\mathbb{C}),\rho^{(3)})\in \{1,2,3\}$ for each $\lambda=(\lambda_1,\lambda_2,\lambda_3)$. 

(ii) Theorem~\ref{thm:main} shows in particular that for each dimension $d$, there is a uniform bound (depending on $d$ only)  for the multiplicities $\mathrm{mult}_{\lambda}(\frak{sl}_2(\mathbb{C}),\rho^{(d)})$. For comparison  we mention that the 
multiplicities in the cocharacter sequence of the ordinary polynomial identities of $2\times 2$ matrices are unbounded: 
see \cite{Drensky:84} and \cite{Formanek}. For example, for any partition $\lambda=(\lambda_1,\lambda_2)$ with $\lambda_2>0$, the multiplicity is $(\lambda_1-\lambda_2+1)\lambda_2$. On the other hand, the cocharacter multiplicities of any PI algebra are polynomially bounded by \cite{berele-regev}. 

(iii) There is no uniform upper bound independent of $d$ for the multiplicities $\mathrm{mult}_{\lambda}(\frak{sl}_2(\mathbb{C}),\rho^{(d)})$, because by  Proposition~\ref{prop:lowerbound}, $\max\{\mathrm{mult}_{\lambda}(\frak{sl}_2(\mathbb{C}),\rho^{(d)})\mid m=1,2,\dots,\ \lambda\vdash m\}\ge d-1$ for $d\ge 2$. 

(iv) The irreducible representations of $\mathfrak{sl}_2(\mathbb{C})$ are defined over $\mathbb{Q}$. 
For any field $K$ of characteristic zero and any positive integer $d$, the Lie algebra $\mathfrak{sl}_2(K)$ has a unique 
(up to isomorphism) $d$-dimensional irreducible representation $\rho^{(d)}_K$ over $K$. 
By well-known general arguments, the multiplicities $\mathrm{mult}_{\lambda}(\frak{sl}_2(K),\rho^{(d)}_K)$ do not depend on $K$. 
Therefore Theorem~\ref{thm:main} implies that 
$\mathrm{mult}_{\lambda}(\frak{sl}_2(K),\rho^{(d)}_K)\le 3^{d-2}$ for any field $K$ of characteristic zero. 

(v) A different interpretation and approach to the study of 
 $\mathrm{Hom}_{\mathfrak{g}}(\rho(\mathfrak{g})^{\otimes m}, \mathrm{End}_K(V))$ 
 for $\mathfrak{g}=\mathfrak{sl}_2(\mathbb{C})$ and $\rho=\rho^{(d)}$ is given in our parallel 
 preprint \cite{Domokos}, using classical invariant theory. 
\end{remark} 

We close the introduction by mentioning the recent paper of da Silva Macedo and Koshlukov \cite[Theorem 3.7]{dSM-K}, where the codimension growth of polynomial identities of representations of Lie algebras is studied. 
In particular, in \cite[Theorem 3.7]{dSM-K} the identities of representations of $\frak{sl}_2(\mathbb{C})$ play a decisive role.

\section{Matrix  computations} 

Denote by $\widetilde\rho^{(d)}:\mathfrak{sl}_2(\mathbb{C})\to 
\mathfrak{gl}(\mathbb{C}^{d\times d})$ 
the representation given by 
\begin{equation}\label{eq:rhotilde}  
\widetilde\rho^{(d)}(A)(L)= \rho^{(d)}(A)    L-L   \rho^{(d)}(A) \text{ for }A\in \mathfrak{sl}_2(\mathbb{C}), \ 
L\in \mathbb{C}^{d\times d}. 
\end{equation}  
We have $\widetilde\rho^{(d)}\cong \rho^{(d)}\otimes {\rho^{(d)}}^*$. 
The representations of $\mathfrak{sl}_2(\mathbb{C})$ are self-dual, and so by the Clebsch-Gordan rules we have 
\begin{equation}\label{eq:clebsch-gordan}
\widetilde\rho^{(d)}\cong \rho^{(d)}\otimes {\rho^{(d)}}\cong\bigoplus_{n=1}^d\rho^{(2n-1)}.
\end{equation} 
We shall need an explicit decomposition of $\mathbb{C}^{d\times d}$ as a direct sum of minimal 
$\widetilde\rho^{(d)}$-invariant subspaces. 

Set
\[e:=\left(\begin{array}{cc}0 & 1 \\0 & 0\end{array}\right), \quad f:=\left(\begin{array}{cc}0 & 0 \\1 & 0\end{array}\right), 
\quad h:=\left(\begin{array}{cc}1 & 0 \\0 & -1\end{array}\right),\]
so $e,f,h$ is a $\mathbb{C}$-vector space basis of $\mathfrak{sl}_2(\mathbb{C})$, with $[h,e]=2e$, $[h,f]=-2f$, and $[e,f]=h$.  

Recall that given a representation $\psi:\mathfrak{sl}_2(\mathbb{C})\to \mathfrak{gl}(V)$, by a \emph{highest weight vector} we mean a non-zero element $w\in V$ such that $\psi(e)(w)=0\in V$ and $\psi(h)(w)=n w$ for some 
non-negative integer $n$ (the non-negative integer $\lambda$ is called the \emph{weight} of $w$); in this case 
$w$ generates a minimal $\mathfrak{sl}_2(\mathbb{C})$-invariant subspace in $V$, on which the representation of $\mathfrak{sl}_2(\mathbb{C})$ is isomorphic to $\rho^{(n+1)}$. Moreover, any finite dimensional irreducible $\mathfrak{sl}_2(\mathbb{C})$-module contains a unique (up to non-zero scalar multiples) highest weight vector.

\begin{lemma}\label{lemma:sl2decomposition of end} 
Consider the $\mathfrak{sl}_2(\mathbb{C})$-module  $\mathbb{C}^{d\times d}$ via the representation $\widetilde\rho^{(d)}$. To simplify notation set $\rho:=\rho^{(d)}$ and 
$\widetilde\rho:=\widetilde\rho^{(d)}$. 
\begin{itemize}
\item[(i)] $\rho(e)^n$ is a highest weight vector in $\mathbb{C}^{d\times d}$ of weight $2n$ 
for $n=0,1,\dots,d-1$.  
\item[(ii)] $\rho(e)^{n-1}$ generates a minimal $\widetilde\rho$-invariant subspace $V_n$ on which $\mathfrak{sl}_2(\mathbb{C})$ acts via $\rho^{(2n-1)}$ for $n=1,\dots,d$. 
\item[(iii)] $\mathbb{C}^{d\times d}=\bigoplus_{n=1}^dV_n$. 
\item[(iv)] For $L_1\in V_{n_1}$ and $L_2\in V_{n_2}$ with $1\le n_1\neq n_2\le d$ we have 
$\mathrm{Tr}(L_1  L_2)=0$. 
\end{itemize}
\end{lemma}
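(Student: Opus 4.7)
The plan is to handle the four items in order, working inside the associative algebra $\mathbb{C}^{d\times d}$.

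For (i), the starting point is the commutator identity $\rho(h)\rho(e)^n=\rho(e)^n\rho(h)+2n\,\rho(e)^n$, which I will prove by induction on $n$ from the associative relation $\rho(h)\rho(e)-\rho(e)\rho(h)=2\rho(e)$ (the $\mathfrak{sl}_2$-relation $[h,e]=2e$ applied inside $\mathbb{C}^{d\times d}$). Plugging into \eqref{eq:rhotilde} immediately gives $\widetilde\rho^{(d)}(e)(\rho(e)^n)=[\rho(e),\rho(e)^n]=0$ and $\widetilde\rho^{(d)}(h)(\rho(e)^n)=2n\,\rho(e)^n$. To conclude that $\rho(e)^n$ is a \emph{nonzero} highest weight vector for $0\le n\le d-1$, I will invoke the standard description of the $d$-dimensional irreducible representation: on a weight basis $v_0,\dots,v_{d-1}$ the matrix $\rho(e)$ is a single Jordan block of size $d$, hence $\rho(e)^{d-1}\neq 0$. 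Statement (ii) then follows from the general fact that a highest weight vector of weight $k$ generates a simple submodule isomorphic to $\rho^{(k+1)}$; taking $k=2(n-1)$ gives $V_n\cong\rho^{(2n-1)}$.

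For (iii), the submodules $V_1,\dots,V_d$ have pairwise distinct dimensions $1,3,\dots,2d-1$, hence are pairwise non-isomorphic, and in particular their sum is direct. The Clebsch--Gordan decomposition \eqref{eq:clebsch-gordan} shows that each odd-dimensional irreducible appears in $\widetilde\rho^{(d)}$ with multiplicity one, so $V_1\oplus\cdots\oplus V_d$ already exhausts every isotypic component. The dimension check $\sum_{n=1}^d(2n-1)=d^2=\dim_{\mathbb{C}}\mathbb{C}^{d\times d}$ forces equality $\mathbb{C}^{d\times d}=\bigoplus_{n=1}^d V_n$.

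Part (iv) is the step that calls for a genuine idea, and it is the one I anticipate as the main (though modest) obstacle. The key observation is that the trace pairing $(L_1,L_2)\mapsto\mathrm{Tr}(L_1L_2)$ on $\mathbb{C}^{d\times d}$ is $\widetilde\rho^{(d)}$-invariant: for every $A\in\mathfrak{sl}_2(\mathbb{C})$ a direct expansion and cyclicity of the trace yield
\[\mathrm{Tr}(\widetilde\rho^{(d)}(A)(L_1)\,L_2)+\mathrm{Tr}(L_1\,\widetilde\rho^{(d)}(A)(L_2))=\mathrm{Tr}([\rho^{(d)}(A),L_1L_2])=0.\]
Equivalently, the trace form defines an $\mathfrak{sl}_2(\mathbb{C})$-module homomorphism $\mathbb{C}^{d\times d}\to(\mathbb{C}^{d\times d})^*$. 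Since the irreducibles of $\mathfrak{sl}_2(\mathbb{C})$ are self-dual, $V_{n_2}^*\cong V_{n_2}$, and since $V_{n_1}\not\cong V_{n_2}$ when $n_1\neq n_2$, Schur's lemma forces the restriction of this homomorphism, and hence the induced pairing $V_{n_1}\times V_{n_2}\to\mathbb{C}$, to vanish, which is exactly (iv). Aside from checking invariance of the trace form, every other step is standard $\mathfrak{sl}_2$-representation theory.
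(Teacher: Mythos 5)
Your proposal is correct, and parts (i)--(iii) run essentially parallel to the paper's proof (the same telescoping computation of $\widetilde\rho(h)(\rho(e)^n)$, the same appeal to the highest-weight-vector fact for (ii); for (iii) you substitute a dimension count $\sum_{n=1}^d(2n-1)=d^2$ for the paper's direct appeal to \eqref{eq:clebsch-gordan}, which is an immaterial difference, and you usefully make explicit the non-vanishing of $\rho(e)^n$ for $n\le d-1$, which the paper leaves tacit). The genuine divergence is in (iv). The paper also starts from the invariance of the trace form $\beta(L,M)=\mathrm{Tr}(LM)$, but then argues as follows: it shows the restriction $\beta|_{V_n\times V_n}$ is non-degenerate by exhibiting a non-zero real diagonal matrix $D\in V_n$ with $\mathrm{Tr}(D^2)>0$, concludes $V_n\cap V_n^{\perp}=\{0\}$, and then uses multiplicity-freeness (every invariant subspace is a sum of the $V_j$) to identify $V_n^{\perp}$ with $\bigoplus_{j\ne n}V_j$. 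You instead view $\beta$ as an $\mathfrak{sl}_2(\mathbb{C})$-module homomorphism $\mathbb{C}^{d\times d}\to(\mathbb{C}^{d\times d})^*$ and kill the component $V_{n_1}\to V_{n_2}^*\cong V_{n_2}$ by Schur's lemma, since $V_{n_1}\not\cong V_{n_2}$ are simple. Your route is shorter and more conceptual: it needs no explicit weight-space computation and no non-degeneracy claim, only self-duality of the $V_n$ (which follows from their classification as $\rho^{(2n-1)}$). What the paper's argument buys in exchange is the extra fact that $\beta|_{V_n}$ is non-degenerate, which is not needed for the lemma as stated but is the kind of information one sometimes wants later; for the purposes of this lemma your argument is complete and, if anything, cleaner.
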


\begin{proof} (i) We have $\widetilde\rho(e)(\rho(e)^n)=\rho(e) \rho(e)^n-\rho(e)^n \rho(e)=0$ 
and 
\[\widetilde\rho(h)(\rho(e)^n)=\rho([h,e])  \rho(e)^{n-1}+\rho(e) \rho([h,e]) \rho(e)^{n-2}+\cdots+
\rho(e)^{n-1} \rho([h,e])=2n\rho(e)^n.\] 
This shows that $\rho(e)^n$ is a highest weight vector of weight $2n$ for the representation $\widetilde\rho$. 

(ii) Statement (i) implies that $\rho(e)^{n-1}$ generates an irreducible $\mathfrak{sl}_2(\mathbb{C})$-submodule of $\widetilde\rho$ isomorphic to $\rho^{(2n-1)}$ for $n=1,\dots,d$. 

(iii) follows from (ii) and \eqref{eq:clebsch-gordan}. 

(iv)  Consider the symmetric non-degenerate bilinear form 
\[\beta:\mathbb{C}^{d\times d}\times \mathbb{C}^{d\times d}\to \mathbb{C}, \quad  
(L,M)\mapsto \mathrm{Tr}(LM).\]  
Note that $\beta$ is $\widetilde\rho$-invariant: 
\begin{align*}\beta([\rho(A),L],M)+\beta(L,[\rho(A),M])=\mathrm{Tr}([\rho(A),L]M)+
\mathrm{Tr}(L[\rho(A),M])\\ 
=\mathrm{Tr}([\rho(A),LM])=0 \quad \text{ for any }A\in \mathfrak{sl}_2(\mathbb{C}). 
\end{align*}  
The radical of the bilinear form  $\beta_{V_n}:V_n\times V_n\mapsto \mathbb{C}$ (the restriction of $\beta$ to $V_n\times V_n$) is a $\widetilde\rho$-invariant subspace in $V_n$, so it is either $V_n$ or $\{0\}$. We claim that it is not $V_n$. Indeed, $V_n$ contains a non-zero diagonal matrix $D$ with real entries, since the zero weight subspace in $\mathbb{C}^{d\times d}$ (with respect to $\widetilde\rho(h)$)  is the subspace of diagonal matrices, and $V_n$ intersects the zero-weight space in a $1$-dimensional subspace (defined over the reals). Now being a sum of squares of non-zero real numbers, $0\neq \mathrm{Tr}(D^2)=\beta(D,D)$. 
Thus $\beta_{V_n}$ is non-degenerate. 
The representation $\widetilde\rho$ is multiplicity free by \eqref{eq:clebsch-gordan}, and by (ii) and (iii), 
every $\widetilde\rho$-invariant subspace is of the form $\sum_{j\in J}V_j$ for some subset $J\subseteq \{1,2,\dots,d\}$. 
As we showed above, the orthogonal complement of $V_n$ (with respect to $\beta$) is disjoint from $V_n$, so it is the sum of the other minimal invariant subspaces $V_j$, $j\in \{1,\dots,d\}\setminus\{n\}$. 
\end{proof} 

The representation $\rho^{(2)}$ is the defining representation of $\mathfrak{sl}_2(\mathbb{C})$ on $\mathbb{C}^2$, 
and $\rho^{(d)}$ is the $(d-1)$th symmetric tensor power of $\rho^{(2)}$. Denote by $x,y$ the standard basis vectors in $\mathbb{C}^2$, and take the basis $x^{d-1},x^{d-1}y,\dots,y^{d-1}$ in the $(d-1)$th symmetric tensor power of $\mathbb{C}^{2}$. Then 
denoting by 
$E_{i,j}$  the matrix unit with entry $1$ in the $(i,j)$ position and zeros in all other positions, the  representation 
$\rho^{(d)}$ as a matrix representation $\mathfrak{sl}_2(\mathbb{C})\to \mathbb{C}^{d\times d}$ is given as follows:   
\[\rho^{(d)}(e)=\sum_{i=1}^{d-1}iE_{i,i+1},\quad \rho^{(d)}(f)=\sum_{i=1}^{d-1}(d-i)E_{i+1,i}, \quad 
\rho^{(d)}(h)=\sum_{i=1}^d(d+1-2i)E_{i,i}\] 

\begin{lemma}  \label{lemma:products of length d}
For $d\ge 3$ the $\mathbb{C}$-vector space $\mathbb{C}^{d\times d}$ is spanned by 
\[\{\rho^{(d)}(A_1) \cdots \rho^{(d)}(A_{d-1})\mid  A_1,\dots,A_{d-1}\in \mathfrak{sl}_2(\mathbb{C})\}.\] 
\end{lemma}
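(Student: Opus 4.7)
The strategy is to exploit the decomposition of Lemma~\ref{lemma:sl2decomposition of end}. Set $\rho:=\rho^{(d)}$ and $\widetilde\rho:=\widetilde\rho^{(d)}$, and write $W_{d-1}$ for the span of all length-$(d-1)$ products $\rho(A_1)\cdots\rho(A_{d-1})$. The Leibniz-type identity
\[ [\rho(B),\rho(A_1)\cdots\rho(A_{d-1})] = \sum_{i=1}^{d-1}\rho(A_1)\cdots\rho([B,A_i])\cdots\rho(A_{d-1}) \]
shows that $W_{d-1}$ is $\widetilde\rho$-invariant, so by parts (ii) and (iii) of Lemma~\ref{lemma:sl2decomposition of end} we may write $W_{d-1} = \bigoplus_{j\in S}V_j$ for some $S\subseteq\{1,\dots,d\}$. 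Since $V_j$ is the $\widetilde\rho$-submodule generated by the highest weight vector $\rho(e)^{j-1}$, it suffices to show that $\rho(e)^{j-1}\in W_{d-1}$ for each $j=1,\dots,d$.

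For $j\geq 2$, the identity $[\rho(h),\rho(e)^{j-1}] = 2(j-1)\rho(e)^{j-1}$ (which holds since $\rho(e)^{j-1}$ has $\widetilde\rho$-weight $2(j-1)$) rewrites $\rho(e)^{j-1}$ as a combination of two length-$j$ products, giving $\rho(e)^{j-1}\in W_j$; together with the trivial inclusion $\rho(e)^{j-1}\in W_{j-1}$ this yields $V_j \subseteq W_{j-1}\cap W_j$. The Casimir identity $\rho(e)\rho(f)+\rho(f)\rho(e)+\tfrac{1}{2}\rho(h)^2=\tfrac{d^2-1}{2}I$ places $I$ in $W_2$, and since $X=X\cdot I$ for any $X\in W_k$, substituting this length-$2$ expression for $I$ yields the padding $W_k\subseteq W_{k+2}$. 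Iterating, $V_j\subseteq W_m$ for every $m\geq j-1$, so taking $m=d-1$ handles each $j\in\{2,\dots,d\}$.

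The one remaining case, $j=1$, demands $I\in W_{d-1}$. Padding by $I\in W_2$ alone only produces $I\in W_m$ for $m$ even, so we also need an odd-length realization; the crux of the proof is thus the auxiliary assertion $I\in W_3$. This will be established by first verifying, using the Lie brackets $[h,e]=2e$, $[h,f]=-2f$, $[e,f]=h$, the length-$3$ rewrites
\[ \rho(e)\rho(f) = \tfrac{1}{2}\bigl(\rho(e)\rho(f)\rho(h) - \rho(e)\rho(h)\rho(f)\bigr),\]
\[ \rho(f)\rho(e) = \tfrac{1}{2}\bigl(\rho(f)\rho(h)\rho(e) - \rho(f)\rho(e)\rho(h)\bigr),\]
\[ \rho(h)^2 = \rho(e)\rho(f)\rho(h) - \rho(f)\rho(e)\rho(h), \]
and then substituting into the Casimir identity to express $I$ explicitly as a linear combination of four length-$3$ products. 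Combining $I\in W_3$ with $I\in W_2$ and the padding inclusion gives $I\in W_m$ for all $m\geq 2$, and in particular $V_1\subseteq W_{d-1}$ for $d\geq 3$. Assembling all the cases yields $W_{d-1}=\bigoplus_{j=1}^d V_j=\mathbb{C}^{d\times d}$.
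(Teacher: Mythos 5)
Your proof is correct. You share with the paper the same reduction: the span $W_{d-1}$ of length-$(d-1)$ products is $\widetilde\rho$-invariant, hence by multiplicity-freeness it is a sum of some of the $V_j$ from Lemma~\ref{lemma:sl2decomposition of end}, so one only has to hit each isotypic component. But the way you hit them is genuinely different. The paper shows $\mathcal{L}\cap V_n\neq\{0\}$ indirectly, by exhibiting explicit elements $L_n\in\mathcal{L}$ (built from $\rho(f)^n$ and powers of $\rho(h)$) with $\mathrm{Tr}(\rho(e)^nL_n)\neq 0$, which requires the trace-orthogonality of the $V_j$ (part (iv) of Lemma~\ref{lemma:sl2decomposition of end}) and a hands-on computation with the explicit matrix entries of $\rho^{(d)}(e)$, $\rho^{(d)}(f)$, $\rho^{(d)}(h)$, including a separate treatment of the case $n=d-2$. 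You instead place the highest weight vectors $\rho(e)^{j-1}$ directly into $W_{d-1}$ by purely formal identities: the eigenvalue equation $[\rho(h),\rho(e)^{j-1}]=2(j-1)\rho(e)^{j-1}$ lets you lengthen a product by one factor, the Casimir identity (whose scalar $\tfrac{d^2-1}{2}$ is nonzero for $d\ge 2$) gives $I\in W_2$ and hence the padding $W_k\subseteq W_{k+2}$, and your three length-$3$ rewrites (all of which I checked against $[h,e]=2e$, $[h,f]=-2f$, $[e,f]=h$) give the needed odd-length expression $I\in W_3$. Your route avoids the matrix-entry computations and the trace pairing entirely, at the cost of invoking the Casimir eigenvalue as a known fact; it also yields the slightly stronger conclusion $W_m=\mathbb{C}^{d\times d}$ for every $m\ge d-1$, not just $m=d-1$. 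Both arguments are complete; yours is the more conceptual, the paper's the more self-contained and computational.
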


\begin{proof} To simplify the notation write $\rho:=\rho^{(d)}$ and $\widetilde\rho:=\widetilde\rho^{(d)}$. 
Let $\mathcal{L}$ be the subspace of $\mathbb{C}^{d\times d}$ spanned by 
the products $\rho(A_1) \cdots\rho(A_{d-1})$, where $A_1,\dots,A_{d-1}\in \mathfrak{sl}_2(\mathbb{C})$. 
Clearly $\mathcal{L}$ is a $\widetilde\rho$-invariant subspace of $\mathbb{C}^{d\times d}$. 
Since the representation $\widetilde\rho$ is multiplicity free by \eqref{eq:clebsch-gordan}, we have 
$\mathcal{L}=\sum_{j\in J}V_j$ for some subset $J\subseteq \{1,2,\dots,d\}$ 
by Lemma~\ref{lemma:sl2decomposition of end} (ii) and (iii). 
Therefore to prove the equality 
$\mathcal{L}=\mathbb{C}^{d\times d}$ it is sufficient to show that $\mathcal{L}\cap V_n\neq\{0\}$ for each $n=1,\dots,d$, 
or equivalently, that $\mathcal{L}$ is not contained in $\sum_{j\in \{1,\dots,d\}\setminus \{n\}}V_j$. 
Since $V_d$ is generated by $\rho(e)^{d-1}\in \mathcal{L}$, we have $V_d\subseteq \mathcal{L}$. 
Moreover, to prove 
$\mathcal{L}\nsubseteq \sum_{j\in \{1,\dots,d\}\setminus \{n+1\}}V_j$ for $n\in\{0,1,\dots,d-2\}$, 
it is sufficient to present an element $L_n\in \mathcal{L}$  with 
$\mathrm{Tr}(\rho(e)^n L_n)\neq 0$ by Lemma~\ref{lemma:sl2decomposition of end} (i),  (ii) and (iv). 
We shall give below such elements $L_n\in \mathcal{L}$ for 
$n=0,1,\dots,d-2$. 

For $n=1,\dots,d-1$ we have 
\[\rho(e)^n=\sum_{j=1}^{d-n}j\cdot (j+1)\cdots (j+n-1)E_{j,j+n}\] 
\[\rho(f)^n=\sum_{j=1}^{d-n} (d-j) \cdot (d-j-1)\cdots (d-j-n+1)E_{j+n.j}\]
and $\rho(e)^0=I_d=\rho(f)^0$, where $I_d$ is the $d\times d$ identity matrix.  
It follows that for $n=1,\dots,d-1$, 
\[\rho(e)^n \rho(f)^n=\sum_{j=1}^{d-n}j(j+1)\cdots (j+n-1)\cdot (d-j)(d-j-1)\cdots(d-j-n+1)E_{j,j}\] 
is a diagonal matrix with non-negative integer entries, and the $(1,1)$-entry is positive. The same holds for 
$\rho(e)^0\rho(f)^0=I_d$. 
For $n$ with $d-1-n$ even, $\rho(h)^{d-1-n}$ is the square of a diagonal 
matrix with integer entries, and its $(1,1)$-entry is positive. Hence 
$\mathrm{Tr}(\rho(e)^n \rho(f)^n\rho(h)^{d-1-n})\neq 0$, being a positive integer. So in this case we may take $L_n:=\rho(f)^n\rho(h)^{d-1-n}$. 
For $n<d-2$ with $d-1-n$ odd, note that $\rho(e) \rho(f)-\rho(f)\rho(e)=\rho([e,f])=\rho(h)$, 
and thus 
\[\rho(f)^n \rho(h)^{d-2-n}=\rho(f)^n\rho(h)^{d-3-n}(\rho(e)\rho(f)-\rho(f)\rho(e))\] 
also belongs to $\mathcal{L}$. Since  $\rho(h)^{d-2-n}$ is a  diagonal matrix with non-negative integer entries, and with a positive $(1,1)$-entry,
we may take $L_n:=\rho(f)^n \rho(h)^{d-2-n}$ in this case. 
It remains to deal with the case $n=d-2$. 
Then 
\[\rho(e)^{d-2}\rho(f)^{d-2}=(d-1)((d-2)!)^2\cdot (E_{1,1}+E_{2,2}),\] 
hence taking $L_{d-2}:=\rho(f)^{d-2} \rho(h)$ we get 
\begin{align*}\mathrm{Tr}(\rho(e)^{d-2} L_{d-2})&=\mathrm{Tr}((d-1)((d-2)!)^2\cdot((d-1)E_{11}+(d-3)E_{22})
\\ &=(2d-4)(d-1)((d-2)!)^2,
\end{align*} 
which is non-zero for $d\ge 3$. 
This finishes the proof of the equality $\mathcal{L}=\mathbb{C}^{d\times d}$.  
\end{proof}

\section{Adjoint invariants}

Denote by $\mathrm{ad}:\mathfrak{sl}_2(\mathbb{C})\to \mathfrak{gl}(\mathfrak{sl}_2(\mathbb{C}))$ the \emph{adjoint representation} of $\mathfrak{sl}_2(\mathbb{C})$ on itself, so $\mathrm{ad}(A)(B)=[A,B]$ for $A,B\in \mathfrak{sl}_2(\mathbb{C})$. Take the $n$-fold direct sum 
$\mathrm{ad}^{\oplus n}:\mathfrak{sl}_2(\mathbb{C})\to \mathfrak{gl}(\mathfrak{sl}_2(\mathbb{C})^{\oplus n})$ of the adjoint representation, and write 
$\mathcal{O}[\mathfrak{sl}_2(\mathbb{C})^n]^{\mathfrak{sl}_2(\mathbb{C})}$ for the algebra of $\mathrm{ad}^{\oplus n}$-invariant polynomial functions on $\mathfrak{sl}_2(\mathbb{C})^{\oplus n}$. 
There is a right action of 
$\mathrm{GL}_n(\mathbb{C})$  on $\mathfrak{sl}_2(\mathbb{C})^n$ that commutes with $\mathrm{ad}^{\oplus n}$: for $g=(g_{ij})_{i,1=1}^n$ and $(A_1,\dots,A_n)\in 
\mathfrak{sl}_2(\mathbb{C})^n$ we have 
\[(A_1,\dots,A_n)\cdot g:=(\sum_{i=1}^ng_{i1}A_i,\dots,\sum_{i=1}^ng_{in}A_i).\] 
This induces a left $\mathrm{GL}_n(\mathbb{C})$-action on the coordinate ring $\mathcal{O}[\mathfrak{sl}_2(\mathbb{C})^n]$: for $g\in \mathrm{GL}_n(\mathbb{C})$, 
$f\in \mathcal{O}[\mathfrak{sl}_2(\mathbb{C})^n]$ and $(A_1,\dots,A_n)\in  \mathfrak{sl}_2(\mathbb{C})^n$ we have 
$(g\cdot f)(A_1,\dots,A_n)=f((A_1,\dots,A_n)\cdot g)$. 

\begin{lemma} \label{lemma:iota}
Consider the linear map 
$\iota:F_m=\mathbb{C}\langle x_1,\dots,x_m\rangle\to  \mathcal{O}[\mathfrak{sl}_2(\mathbb{C})^{m+d-1}]$  
given by 
\[\iota(f)(A_1,\dots,A_{m+d-1})=\mathrm{Tr}(f(\rho^{(d)}(A_1),\dots,\rho^{(d)}(A_m))\cdot  \rho^{(d)}(A_{m+1}) \cdots \rho^{(d)}(A_{m+d-1}))\] 
for  $f\in F_m$ and 
$(A_1,\dots,A_{m+d-1})\in \mathfrak{sl}_2(\mathbb{C})^{m+d-1}$. It has the following properties:  
\begin{itemize} 
\item[(i)] The image of $\iota$ is contained in the subalgebra 
$\mathcal{O}[\mathfrak{sl}_2(\mathbb{C})^{m+d-1}]^{\mathfrak{sl}_2(\mathbb{C})}$ of $\mathfrak{sl}_2(\mathbb{C})$-invariants.  
\item[(ii)] For $d\ge 3$ the kernel of $\iota$ is the ideal $I(\mathfrak{sl}_2(\mathbb{C}),\rho^{(d)})\cap F_m$. 
\item[(iii)] The map $\iota$ is $\mathrm{GL}_m(\mathbb{C})$-equivariant, where we restrict the $\mathrm{GL}_{m+d-1}(\mathbb{C})$-action on $\mathcal{O}[\mathfrak{sl}_2(\mathbb{C})^{m+d-1}]$ to the subgroup 
$\mathrm{GL}_m(\mathbb{C})\cong\{\left(\begin{array}{cc}g & 0 \\0 & I_{d-1}\end{array}\right)\mid g\in \mathrm{GL}_m(\mathbb{C})\}$ in 
$\mathrm{GL}_{m+d-1}(\mathbb{C})$.  
\end{itemize}
\end{lemma}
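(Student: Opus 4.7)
The plan is to verify the three claims separately. Parts (i) and (iii) are essentially formal and follow from basic properties of $\rho^{(d)}$, conjugation, and the trace; the real content sits in (ii), where the matrix-theoretic Lemma~\ref{lemma:products of length d} of the previous section is applied.

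For (i), I would use that the adjoint representation $\mathrm{ad}$ integrates to the conjugation action $A\mapsto gAg^{-1}$ of the connected group $\mathrm{SL}_2(\mathbb{C})$ on $\mathfrak{sl}_2(\mathbb{C})$, so that $\mathfrak{sl}_2(\mathbb{C})$-invariance of a polynomial function is equivalent to $\mathrm{SL}_2(\mathbb{C})$-invariance. Since $\rho^{(d)}$ likewise integrates to an algebraic group representation satisfying $\rho^{(d)}(gAg^{-1})=\rho^{(d)}(g)\rho^{(d)}(A)\rho^{(d)}(g)^{-1}$, substituting $gA_ig^{-1}$ for each $A_i$ in the definition of $\iota(f)$ conjugates the entire product inside the trace by $\rho^{(d)}(g)$, and the trace is unchanged.

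For (iii), I would unwind the two actions. The action of $g=(g_{ij})\in\mathrm{GL}_m(\mathbb{C})$ on $F_m$ is the algebra automorphism $x_j\mapsto\sum_{i=1}^m g_{ij}x_i$, so by linearity of $\rho^{(d)}$, evaluating $g\cdot f$ on the tuple $(\rho^{(d)}(A_1),\ldots,\rho^{(d)}(A_m))$ coincides with evaluating $f$ on $(\rho^{(d)}(\sum_{i=1}^m g_{i1}A_i),\ldots,\rho^{(d)}(\sum_{i=1}^m g_{im}A_i))$. The block-diagonal element $\mathrm{diag}(g,I_{d-1})\in\mathrm{GL}_{m+d-1}(\mathbb{C})$ acts on tuples in $\mathfrak{sl}_2(\mathbb{C})^{m+d-1}$ by exactly this substitution in the first $m$ coordinates, leaving the last $d-1$ fixed; comparing with the definition of $\iota$ yields $\iota(g\cdot f)=g\cdot\iota(f)$.

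For (ii), one inclusion is immediate: if $f\in I(\mathfrak{sl}_2(\mathbb{C}),\rho^{(d)})\cap F_m$, then $f(\rho^{(d)}(A_1),\ldots,\rho^{(d)}(A_m))$ is the zero matrix for every tuple, so $\iota(f)$ vanishes identically. Conversely, assume $\iota(f)=0$ and fix $(A_1,\ldots,A_m)$; setting $M:=f(\rho^{(d)}(A_1),\ldots,\rho^{(d)}(A_m))\in\mathbb{C}^{d\times d}$, we obtain $\mathrm{Tr}(M\cdot L)=0$ for every $L=\rho^{(d)}(A_{m+1})\cdots\rho^{(d)}(A_{m+d-1})$ with $A_{m+1},\ldots,A_{m+d-1}\in\mathfrak{sl}_2(\mathbb{C})$. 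This is exactly where the hypothesis $d\ge 3$ enters, through Lemma~\ref{lemma:products of length d}: such products span all of $\mathbb{C}^{d\times d}$, and non-degeneracy of the trace pairing then forces $M=0$, so $f\in I(\mathfrak{sl}_2(\mathbb{C}),\rho^{(d)})$. The only substantive obstacle has already been overcome in the previous section; the present lemma is essentially bookkeeping on top of it.
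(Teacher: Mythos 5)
Your proposal is correct, and parts (ii) and (iii) coincide with the paper's argument: (ii) hinges on Lemma~\ref{lemma:products of length d} together with the non-degeneracy of the trace form, exactly as in the paper, and (iii) is the same unwinding of the two actions via linearity of $\rho^{(d)}$. The one place where you take a genuinely different route is part (i). The paper argues infinitesimally: it reduces by linearity to monomials $x_{i_1}\cdots x_{i_k}$, and then expands $0=\mathrm{Tr}([\rho(X),\rho(B_1)\cdots\rho(B_n)])$ by the Leibniz rule to obtain precisely the condition that the Lie algebra annihilates $\iota(x_{i_1}\cdots x_{i_k})$. You instead pass to the group: the adjoint action integrates to simultaneous conjugation by the connected group $\mathrm{SL}_2(\mathbb{C})$, the representation $\rho^{(d)}$ integrates to a group representation intertwining $\mathrm{Ad}$ with conjugation by $\rho^{(d)}(g)$, and the trace of a conjugated product is unchanged; connectedness then converts group invariance back into Lie algebra invariance. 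Both arguments are standard and complete; the paper's is self-contained and purely algebraic (no appeal to the group or to the equivalence of infinitesimal and global invariance), while yours is shorter at the cost of invoking that equivalence and the integrability of $\rho^{(d)}$. No gaps.
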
 

\begin{proof}  
For notational simplicity we shall write $\rho$ instead of $\rho^{(d)}$.

(i) By linearity of $\iota$ it is sufficient to show that $\iota(x_{i_1}\cdots x_{i_k})$ is an $\mathfrak{sl}_2(\mathbb{C})$-invariant for any $i_1,\dots,i_k\in\{1,\dots,m\}$.   Setting 
$n=k+d-1$, $B_1=A_{i_1},\dots,B_k=A_{i_k},B_{k+1}=A_{m+1},\dots,B_n=A_{m+d-1}$ we have 
\begin{equation}\label{eq:xi1...xik}
\iota(x_{i_1}\cdots x_{i_k})(A_1,\dots,A_{m+d})=\mathrm{Tr}(\rho(B_1)\cdots\rho(B_n)).
\end{equation} 
For any $X\in \mathfrak{sl}_2(\mathbb{C})$ we have 
\begin{align*}
0&=\mathrm{Tr}([\rho(X),\rho(B_1)\cdots\rho(B_n)])
\\ &=\mathrm{Tr}(\sum_{j=1}^n\rho(B_1)\cdots\rho(B_{j-1})[\rho(X),\rho(B_j)]\rho(B_{j+1})\cdots\rho(B_n))
\\  &=\mathrm{Tr}(\sum_{j=1}^n\rho(B_1)\cdots\rho(B_{j-1})\rho([X,B_j])\rho(B_{j+1})\cdots\rho(B_n))
\\ &=\sum_{j=1}^n\mathrm{Tr}(\rho(B_1)\cdots\rho(B_{j-1})\rho([X,B_j])\rho(B_{j+1})\cdots\rho(B_n)).
\end{align*} 
The equalities \eqref{eq:xi1...xik} and 
\[\sum_{j=1}^n\mathrm{Tr}(\rho(B_1)\cdots\rho(B_{j-1})\rho([X,B_j])\rho(B_{j+1})\cdots\rho(B_n))=0\] 
mean that $\iota(x_{i_1}\cdots x_{i_k})$ is $\mathfrak{sl}_2(\mathbb{C})$-invariant, so (i) holds. 

(ii) Suppose that $f\in \ker(\iota)$. Then $\mathrm{Tr}(f(\rho(A_1),\dots,\rho(A_m))B)=0$ for 
all $B\in \mathbb{C}^{d\times d}$ by Lemma~\ref{lemma:products of length d}. By non-degeneracy of the trace we get 
$f(\rho(A_1),\dots,\rho(A_m))=0$. That is, $f\in I(\mathfrak{sl}_2(\mathbb{C}),\rho)$. Thus 
$\ker(\iota)\subseteq I(\mathfrak{sl}_2(\mathbb{C}),\rho)\cap F_m$. The reverse inclusion 
$ I(\mathfrak{sl}_2(\mathbb{C}),\rho)\cap F_m\subseteq \ker(\iota)$ is obvious. 

(iii) Take $g=(g_{ij})_{i,j=1}^m\in \mathrm{GL}_m(\mathbb{C})$. 
For $f\in F_m$ and $(A_1,\dots,A_m)\in \mathfrak{sl}_2(\mathbb{C})^m$ we have 
(by linearity of $\rho$) 
\begin{align*} 
&\iota(g\cdot f)(A_1,\dots,A_m)\\ 
&=\mathrm{Tr}(f(\sum_{i=1}^mg_{i1}\rho(A_i),\dots,\sum_{i=1}^mg_{im}\rho(A_i))\cdot  \rho(A_{m+1}) \cdots \rho(A_{m+d}))
\\ &=\mathrm{Tr}(f( \rho(\sum_{i=1}^mg_{i1}(A_i)),\dots, \rho(\sum_{i=1}^mg_{im}(A_i)) \cdot \rho(A_{m+1})\cdots \rho(A_{m+d}))
\\ &=(g\cdot \iota(f))(A_1,\dots,A_m).
\end{align*}
This shows (iii). 
\end{proof} 

Restricting the action of $\mathrm{GL}_n(\mathbb{C})$ on $\mathcal{O}[\mathfrak{sl}_2(\mathbb{C})^n]$ to the subgroup of diagonal matrices we get an $\mathbb{N}_0^n$-grading on $\mathcal{O}[\mathfrak{sl}_2(\mathbb{C})^n]$, preserved by the action of $\mathfrak{sl}_2(\mathbb{C})$. Denote by $\mathcal{O}[\mathfrak{sl}_2(\mathbb{C})^n]_{(1^n)}$ the multihomogeneous component of multidegree $(1,\dots,1)$; this is the space of $n$-linear functions on $\mathfrak{sl}_2(\mathbb{C})$. The spaces $\mathcal{O}[\mathfrak{sl}_2(\mathbb{C})^n]_{(1^n)}$ and $\mathcal{O}[\mathfrak{sl}_2(\mathbb{C})^n]_{(1^n)}^{\mathfrak{sl}_2(\mathbb{C})}$ are $S_n$-invariant (where we restrict the $\mathrm{GL}_n(\mathbb{C})$-action to its subgroup $S_n$ of permutation matrices).   
Lemma~\ref{lemma:iota} has the following immediate consequence: 

\begin{corollary}\label{cor:res-iota} 
For $d\ge 3$ the restriction of $\iota$ to the multilinear component $P_m$ of $\mathbb{C}\langle x_1,\dots,x_m\rangle$ 
factors through an $S_m$-equivariant $\mathbb{C}$-linear embedding 
\[\bar\iota:P_m/(I(\mathfrak{sl}_2(\mathbb{C}))\cap P_m)\to 
\mathcal{O}[\mathfrak{sl}_2(\mathbb{C})^{m+d-1}]_{(1^{m+d-1})}^{\mathfrak{sl}_2(\mathbb{C})}\]
where on the right hand side we consider the restriction of the $S_{m+d-1}$-action to its subgroup $S_m$ 
(the stabilizer in $S_{m+d-1}$ of the elements $m+1,m+2,\dots,m+d-1$). 
\end{corollary}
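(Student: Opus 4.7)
The plan is to assemble the three parts of Lemma~\ref{lemma:iota} directly, since each required feature of $\bar\iota$ corresponds to one of them. There is no serious obstacle; the corollary is essentially a repackaging of the lemma, and the only small observation to add is that $\iota$ preserves multidegree $(1^{m+d-1})$ when restricted to the multilinear component $P_m$.

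First I would verify that $\iota$ maps $P_m$ into the multilinear component $\mathcal{O}[\mathfrak{sl}_2(\mathbb{C})^{m+d-1}]_{(1^{m+d-1})}$. For a basis monomial $x_{\pi(1)}\cdots x_{\pi(m)}\in P_m$ with $\pi\in S_m$, the definition of $\iota$ gives
\[\iota(x_{\pi(1)}\cdots x_{\pi(m)})(A_1,\ldots,A_{m+d-1})=\mathrm{Tr}\bigl(\rho^{(d)}(A_{\pi(1)})\cdots\rho^{(d)}(A_{\pi(m)})\,\rho^{(d)}(A_{m+1})\cdots\rho^{(d)}(A_{m+d-1})\bigr),\]
which is linear in each $A_i$ separately because each index in $\{1,\ldots,m+d-1\}$ appears exactly once (using that $\pi$ is a permutation). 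Hence $\iota(P_m)$ lies in multidegree $(1^{m+d-1})$, and by Lemma~\ref{lemma:iota}(i) inside its $\mathfrak{sl}_2(\mathbb{C})$-invariant subspace.

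Next I would invoke Lemma~\ref{lemma:iota}(ii), which applies because $d\ge 3$: it gives $\ker(\iota)=I(\mathfrak{sl}_2(\mathbb{C}),\rho^{(d)})\cap F_m$ as subspaces of $F_m$. Intersecting with $P_m$ yields $\ker(\iota|_{P_m})=I(\mathfrak{sl}_2(\mathbb{C}),\rho^{(d)})\cap P_m$, so $\iota|_{P_m}$ factors through a well-defined $\mathbb{C}$-linear injection
\[\bar\iota:P_m/(I(\mathfrak{sl}_2(\mathbb{C}),\rho^{(d)})\cap P_m)\hookrightarrow \mathcal{O}[\mathfrak{sl}_2(\mathbb{C})^{m+d-1}]_{(1^{m+d-1})}^{\mathfrak{sl}_2(\mathbb{C})}.\]

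Finally I would derive the $S_m$-equivariance from Lemma~\ref{lemma:iota}(iii). Under the block-diagonal inclusion $\mathrm{GL}_m(\mathbb{C})\hookrightarrow \mathrm{GL}_{m+d-1}(\mathbb{C})$, $g\mapsto \mathrm{diag}(g,I_{d-1})$, used in that part of the lemma, the subgroup $S_m\subset \mathrm{GL}_m(\mathbb{C})$ of permutation matrices maps onto precisely the pointwise stabilizer in $S_{m+d-1}$ of $\{m+1,\ldots,m+d-1\}$. Restricting the $\mathrm{GL}_m(\mathbb{C})$-equivariance of $\iota$ supplied by Lemma~\ref{lemma:iota}(iii) to this copy of $S_m$ gives the desired $S_m$-equivariance of $\bar\iota$, completing the proof.
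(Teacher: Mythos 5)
Your proposal is correct and matches the paper's approach: the paper simply asserts the corollary as an immediate consequence of Lemma~\ref{lemma:iota}, and your three steps (multilinearity of the image on $P_m$, identification of the kernel via part (ii), and restriction of the $\mathrm{GL}_m(\mathbb{C})$-equivariance of part (iii) to the permutation matrices) are exactly the details being left implicit. Nothing to correct.
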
 

For a partition $\lambda\vdash m$ denote by $r(\lambda)$ the multiplicity of $\chi^{\lambda}$ in the restriction to $S_m$ of the $S_{m+d-1}$-module $\mathcal{O}[\mathfrak{sl}_2(\mathbb{C})^{m+d-1}]_{(1^{m+d-1})}^{\mathfrak{sl}_2(\mathbb{C})}$. 
Corollary~\ref{cor:res-iota} immediately implies the following: 
\begin{corollary}\label{cor:m(lambda)<r(lambda)}  For $d\ge 3$ and any partition $
\lambda\vdash m$ we have the inequality 
\[\mathrm{mult}_{\lambda}(\mathfrak{sl}_2(\mathbb{C}),\rho^{(d)})\le r(\lambda).\]
\end{corollary}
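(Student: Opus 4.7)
The plan is to deduce the statement directly from Corollary~\ref{cor:res-iota}. The key underlying fact is semisimplicity: since $\mathbb{C}S_m$ is semisimple by Maschke's theorem, any injective $S_m$-equivariant $\mathbb{C}$-linear map $W \hookrightarrow V$ of finite-dimensional complex $S_m$-modules identifies $W$ with an $S_m$-submodule of $V$, and consequently for every irreducible character $\chi^\lambda$ of $S_m$ its multiplicity in $W$ is bounded above by its multiplicity in $V$.

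I would then apply this observation to the embedding $\bar\iota$ supplied by Corollary~\ref{cor:res-iota}. By the very definition of the cocharacter sequence, the multiplicity of $\chi^\lambda$ in the source $P_m/(I(\mathfrak{sl}_2(\mathbb{C}),\rho^{(d)}) \cap P_m)$ is exactly $\mathrm{mult}_\lambda(\mathfrak{sl}_2(\mathbb{C}),\rho^{(d)})$. By the definition of $r(\lambda)$ given just before the corollary, the multiplicity of $\chi^\lambda$ in the target $\mathcal{O}[\mathfrak{sl}_2(\mathbb{C})^{m+d-1}]_{(1^{m+d-1})}^{\mathfrak{sl}_2(\mathbb{C})}$, regarded as an $S_m$-module via the stabilizer embedding $S_m \hookrightarrow S_{m+d-1}$ fixing the last $d-1$ indices, is $r(\lambda)$. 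Combining the two identifications with the semisimplicity argument above yields the required inequality.

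No serious obstacle is to be expected: the corollary is essentially a translation of Corollary~\ref{cor:res-iota} into the language of multiplicities, with complete reducibility of $S_m$-representations doing all the real work. The hypothesis $d \geq 3$ is inherited unchanged from Corollary~\ref{cor:res-iota}, where it is required (via Lemma~\ref{lemma:products of length d}) to guarantee that $\bar\iota$ is actually injective; without injectivity the present argument would break down, since then only the character of the image of $\bar\iota$, rather than of the quotient itself, would be bounded by that of the ambient $S_m$-module.
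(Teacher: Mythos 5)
Your argument is correct and matches the paper's reasoning exactly: the paper states that Corollary~\ref{cor:res-iota} "immediately implies" this corollary, and your appeal to complete reducibility of $S_m$-modules together with the $S_m$-equivariant embedding $\bar\iota$ is precisely the implicit content of that step. Your remark on why $d\ge 3$ is needed (injectivity of $\bar\iota$ via Lemma~\ref{lemma:products of length d}) is also accurate.
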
 

The $S_n$-character of 
$\mathcal{O}[\mathfrak{sl}_2(\mathbb{C})^n]_{(1^n)}^{\mathfrak{sl}_2(\mathbb{C})}$ is known: 

\begin{proposition}\label{prop:Sn-character} 
For a partition $\lambda\vdash n$ denote by $\nu(\lambda)$ the multiplicity of $\chi^{\lambda}$ in the $S_n$-character of 
$\mathcal{O}[\mathfrak{sl}_2(\mathbb{C})^n]_{(1^n)}^{\mathfrak{sl}_2(\mathbb{C})}$. Then we have 
\[\nu(\lambda)=\begin{cases}1\text{ for }\lambda=(\lambda_1,\lambda_2,\lambda_3) \text{ with } \lambda_1\equiv\lambda_2\equiv\lambda_3 \text{ modulo } 2 
\\ 0 \text{ otherwise.} \end{cases} \] 
\end{proposition}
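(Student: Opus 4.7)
The approach is to reduce $\nu(\lambda)$ to the dimension of an $SL_2(\mathbb{C})$-invariant subspace of a Schur functor via Schur--Weyl duality, and then compute that dimension using the classical invariant theory of the orthogonal group $O_3(\mathbb{C})$.

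I identify $\mathcal{O}[\mathfrak{sl}_2(\mathbb{C})^n]_{(1^n)}$ with $(\mathfrak{sl}_2(\mathbb{C})^{\ast})^{\otimes n}$, and using the self-duality $\mathfrak{sl}_2 \cong \mathfrak{sl}_2^{\ast}$ (Killing form), with $\mathfrak{sl}_2(\mathbb{C})^{\otimes n}$ as $\mathfrak{sl}_2 \times S_n$-module. Schur--Weyl duality for $GL_3(\mathbb{C}) = GL(\mathfrak{sl}_2(\mathbb{C}))$ gives
\[
\mathfrak{sl}_2(\mathbb{C})^{\otimes n} \cong \bigoplus_{\lambda \vdash n,\ \ell(\lambda) \leq 3}\, \mathbb{S}^\lambda \mathfrak{sl}_2(\mathbb{C}) \otimes M^\lambda
\]
as $GL_3 \times S_n$-module, with $\mathbb{S}^\lambda$ the Schur functor and $M^\lambda$ the Specht module. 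Taking $\mathfrak{sl}_2$-invariants yields $\nu(\lambda) = \dim (\mathbb{S}^\lambda \mathfrak{sl}_2(\mathbb{C}))^{SL_2(\mathbb{C})}$; in particular $\nu(\lambda) = 0$ when $\ell(\lambda) \geq 4$ since $\mathbb{S}^\lambda$ vanishes on $3$-dimensional spaces, which proves the first part of the claim.

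The kernel $\{\pm I\}$ of the adjoint $SL_2 \to GL(\mathfrak{sl}_2)$ acts trivially, so $SL_2$-invariants coincide with $SO_3(\mathbb{C})$-invariants in $\mathbb{S}^\lambda V_3$, where $V_3 \cong \mathfrak{sl}_2$ is the $3$-dimensional standard representation of $SO_3$. Since $\det(-I_3) = -1$, the element $-I_3$ is central in $O_3$ but not in $SO_3$, giving $O_3 = SO_3 \times \langle -I_3\rangle$; hence the trivial $SO_3$-representation admits exactly two extensions to $O_3$---the trivial representation and the determinant (sign) representation. Therefore
\[
\nu(\lambda) = \dim (\mathbb{S}^\lambda V_3)^{O_3} + \dim (\mathbb{S}^\lambda V_3 \otimes \det V_3)^{O_3}.
\]

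The key computational input is Weyl's first fundamental theorem for $O_n$: the multilinear $O_n$-invariants on $V_n^{\otimes k}$ are spanned by products of the pair contractions $\langle v_i, v_j\rangle$, which via Schur--Weyl yields
\[
\dim (\mathbb{S}^\mu V_n)^{O_n} = \begin{cases} 1 & \text{if all parts of } \mu \text{ are even and } \ell(\mu) \leq n, \\ 0 & \text{otherwise.}\end{cases}
\]
Using additionally the $GL_3$-isomorphism $\mathbb{S}^\lambda V_3 \otimes \det V_3 \cong \mathbb{S}^{\lambda+(1,1,1)}V_3$, the two summands in the preceding display become the indicators $[\lambda \text{ has all parts even}]$ and $[\lambda \text{ has three positive odd parts}]$ respectively. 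Adding these yields $\nu(\lambda) = 1$ precisely when $\lambda = (\lambda_1,\lambda_2,\lambda_3)$ satisfies $\lambda_1 \equiv \lambda_2 \equiv \lambda_3 \pmod 2$, and $\nu(\lambda) = 0$ otherwise. The principal technical obstacle is the clean invocation of Weyl's $O_n$-invariant dimension formula in the present multilinear setting and the careful bookkeeping for the $SO_3$-vs-$O_3$ distinction via the central involution $-I_3$.
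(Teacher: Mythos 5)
Your argument is correct, but it takes a genuinely different route from the paper. The paper's proof is essentially a citation: it quotes Procesi's description \cite{Procesi:2} of the $\mathrm{GL}_n(\mathbb{C})$-module structure of $\mathcal{O}[\mathfrak{sl}_2(\mathbb{C})^n]^{\mathfrak{sl}_2(\mathbb{C})}$ (the degree-$n$ component is multiplicity free, with summands $W_\lambda$ exactly for $\lambda=(\lambda_1,\lambda_2,\lambda_3)$ of constant parity) and then passes to the multilinear component of each $W_\lambda$, whose $S_n$-character is $\chi^\lambda$. You instead rederive the statement from scratch: identifying $\mathcal{O}[\mathfrak{sl}_2(\mathbb{C})^n]_{(1^n)}$ with $\mathfrak{sl}_2(\mathbb{C})^{\otimes n}$ via the Killing form, applying Schur--Weyl duality for $\mathrm{GL}_3$ to get $\nu(\lambda)=\dim(\mathbb{S}^\lambda\mathfrak{sl}_2(\mathbb{C}))^{\mathrm{SL}_2}$ (which immediately kills $\ell(\lambda)\ge 4$), passing from $\mathrm{SL}_2$ to $\mathrm{SO}_3$ and then comparing with $O_3=\mathrm{SO}_3\times\langle -I_3\rangle$, and finishing with the classical branching multiplicity of the trivial $O_n$-module in $W_\mu$ together with $\mathbb{S}^\lambda V_3\otimes\Lambda^3V_3\cong\mathbb{S}^{\lambda+(1,1,1)}V_3$. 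This buys logical self-containedness (Procesi's theorem is not needed, though its proof runs through essentially the same $\mathrm{SO}_3$ picture), at the price of invoking the $\mathrm{GL}_n\downarrow O_n$ branching fact; note that deducing $\dim(\mathbb{S}^\mu V_n)^{O_n}=1$ for $\mu$ even, $\ell(\mu)\le n$, and $0$ otherwise ``from Weyl's FFT via Schur--Weyl'' really rests on the decomposition of the $S_{2k}$-permutation module on perfect matchings as $\bigoplus_{\mu\vdash k}M^{2\mu}$ (equivalently, the Cartan--Helgason decomposition $\mathbb{C}[\mathrm{GL}_n/O_n]\cong\bigoplus_\mu W_{2\mu}$, valid in the range $\ell\le n$ you use), so this should be cited rather than treated as immediate. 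Two small fixes: the point about $-I_3$ is that it is central in $O_3$ and \emph{not contained in} $\mathrm{SO}_3$ (that is what yields the direct product decomposition), and you should state explicitly that the adjoint map $\mathrm{SL}_2\to\mathrm{SO}_3$ is \emph{surjective}, so that $\mathrm{SL}_2$-invariants coincide with (rather than merely contain) the $\mathrm{SO}_3$-invariants.
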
 

\begin{proof} 
The $\mathrm{GL}_n(\mathbb{C})$-module structure of $\mathcal{O}[\mathfrak{sl}_2(\mathbb{C})^n]^{\mathfrak{sl}_2(\mathbb{C})}$ is given for example in \cite[Theorem 2.2]{Procesi:2}. The isomorphism types of the irreducible $\mathrm{GL}_n(\mathbb{C})$-module direct summands of the degree $n$ homogeneous component of $\mathcal{O}[\mathfrak{sl}_2(\mathbb{C})^n]$ are labeled by partitions of $n$ with at most $3$ non-zero parts. The multiplicity $\mu(\lambda)$ of the irreducible $\mathrm{GL}_n(\mathbb{C})$-module $W_{\lambda}$ in the degree $n$ homogeneous component of $\mathcal{O}[\mathfrak{sl}_2(\mathbb{C})^n]^{\mathfrak{sl}_2(\mathbb{C})}$ is $1$ if $\lambda_1,\lambda_2,\lambda_3$ have the same parity and is zero otherwise.  Note finally that the multilinear component of $W_{\lambda}$ is $S_n$-stable, and its $S_n$-character is $\chi^{\lambda}$ 
(see for example \cite[Corollary 6.3.11]{AGPR}). 
\end{proof}

Following \cite[Section I.1]{Macdonald} for partitions $\lambda \vdash n$ and $\mu\vdash k$ we write 
$\lambda\subset \mu$ is $\lambda_i\le\mu_i$ for all $i$. Moreover,  given $\lambda\vdash m$ and $\mu\vdash m+d-1$ 
with $\lambda\subset \mu$, by a \emph{standard tableau of shape} $\mu/\lambda$ we mean a sequence 
$\lambda^{(0)}\subset\lambda^{(1)}\subset\dots\subset\lambda^{(d-1)}$ of partitions $\lambda^{(i)}\vdash m+i$, 
where $\lambda^{(0)}=\lambda$, $\lambda^{(d-1)}=\mu$. 
By the well-known branching rules for the symmetric group, for $\lambda\vdash m$ the multiplicity of 
$\chi^{\lambda}$ in the restriction to $S_m$  of the irreducible $S_{m+d-1}$-character $\chi^{\mu}$ equals the number of standard tableaux of shape $\mu/\lambda$ (see for example \cite[Theorem 6.4.11]{AGPR}). 
Therefore Proposition~\ref{prop:Sn-character} has the following consequence. 

\begin{corollary} \label{cor:Sm-character}
We have the equality 
\begin{align*} 
r(\lambda)=|\{ T\mid & T \text{ is a standard skew tableau of shape }\mu/\lambda,\\  
&\mu\vdash m+d-1,  \  \mu=(\mu_1,\mu_2,\mu_3),\ \mu_1\equiv\mu_2\equiv\mu_3\textrm{ modulo }2\}|.\end{align*}
\end{corollary}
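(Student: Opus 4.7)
The plan is to combine Proposition~\ref{prop:Sn-character} (applied with $n = m+d-1$) with the iterated branching rule for restriction from $S_{m+d-1}$ down through $S_{m+d-2}, S_{m+d-3}, \ldots$ to $S_m$. By Frobenius reciprocity / character theory, once we know the $S_{m+d-1}$-character decomposition of the invariant space, the multiplicity of $\chi^{\lambda}$ in its restriction to $S_m$ is computed by summing the branching multiplicities over all $\mu \vdash m+d-1$ appearing with nonzero coefficient.

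More concretely, first I would apply Proposition~\ref{prop:Sn-character} with $n = m+d-1$, which identifies the $S_{m+d-1}$-module $\mathcal{O}[\mathfrak{sl}_2(\mathbb{C})^{m+d-1}]_{(1^{m+d-1})}^{\mathfrak{sl}_2(\mathbb{C})}$ as a multiplicity-free sum
\[
\bigoplus_{\mu} V^{\mu},
\]
where $V^\mu$ is the irreducible $S_{m+d-1}$-module with character $\chi^\mu$ and the sum runs over partitions $\mu = (\mu_1,\mu_2,\mu_3) \vdash m+d-1$ with $\mu_1\equiv \mu_2\equiv\mu_3 \pmod 2$. Since restriction and taking $S_m$-isotypic multiplicities are additive on direct sums, we obtain
\[
r(\lambda) \;=\; \sum_{\mu} \bigl[\mathrm{Res}^{S_{m+d-1}}_{S_m}\chi^{\mu} : \chi^{\lambda}\bigr],
\]
where the sum is over the same set of $\mu$'s.

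Next I would apply the branching rule repeatedly. The usual statement, cited in the excerpt just before the corollary, asserts that for $\lambda\vdash m$ and $\mu\vdash m+d-1$ with $\lambda\subset\mu$, the multiplicity of $\chi^{\lambda}$ in $\mathrm{Res}^{S_{m+d-1}}_{S_m}\chi^{\mu}$ equals the number of chains
\[
\lambda = \lambda^{(0)} \subset \lambda^{(1)} \subset \cdots \subset \lambda^{(d-1)} = \mu, \qquad \lambda^{(i)}\vdash m+i,
\]
i.e.\ the number of standard tableaux of skew shape $\mu/\lambda$ in the sense adopted in the excerpt. (If $\lambda \not\subset \mu$ the multiplicity is zero, so such $\mu$ simply contribute nothing.) Substituting this into the previous display and collecting the resulting tableaux over all admissible $\mu$ yields exactly the enumerative expression in the statement.

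There is essentially no hard step here: the only point requiring care is to verify that the set indexed by $\mu$ in Proposition~\ref{prop:Sn-character} (partitions of $m+d-1$ into at most three parts of common parity) is precisely the set appearing in the corollary, and that the branching-rule convention used matches the definition of standard tableau of shape $\mu/\lambda$ given just above the corollary. Both are immediate from the definitions.
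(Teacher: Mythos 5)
Your proposal is correct and follows exactly the paper's route: the paper derives this corollary by combining Proposition~\ref{prop:Sn-character} (applied with $n=m+d-1$, giving the multiplicity-free decomposition into $\chi^\mu$ with $\mu=(\mu_1,\mu_2,\mu_3)$ of common parity) with the iterated branching rule, which it states just before the corollary in the same form you use. Nothing is missing.
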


\begin{corollary}\label{cor:3^d} 
For $d\ge 3$ we have the inequality $r(\lambda)\le 3^{d-2}$. 
\end{corollary}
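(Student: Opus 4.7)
The plan is to use the combinatorial description of $r(\lambda)$ provided by Corollary~\ref{cor:Sm-character} and exploit the parity constraint on $\mu$ to save a factor of $3$ over the trivial bound $3^{d-1}$.

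First I would recall that by Corollary~\ref{cor:Sm-character}, $r(\lambda)$ equals the number of sequences
\[\lambda = \lambda^{(0)} \subset \lambda^{(1)} \subset \cdots \subset \lambda^{(d-1)} = \mu\]
of partitions with $\lambda^{(i)} \vdash m+i$, where each $\lambda^{(i+1)}$ is obtained from $\lambda^{(i)}$ by adjoining a single box, and $\mu = (\mu_1,\mu_2,\mu_3)$ has at most three parts with $\mu_1 \equiv \mu_2 \equiv \mu_3 \pmod{2}$. If $\lambda$ has more than three parts then $r(\lambda) = 0$, so we may assume $\lambda = (\lambda_1,\lambda_2,\lambda_3)$, allowing zero entries. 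Under this convention each $\lambda^{(i)}$ has at most three parts, so at each step the added box lies in one of the three rows and there are at most $3$ choices per step; this yields the crude bound $r(\lambda) \le 3^{d-1}$.

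The crucial refinement is that the last step is forced by the earlier ones. Let $p \in \{0,1\}$ be the common parity of $\mu_1,\mu_2,\mu_3$. Since $\lambda^{(d-2)}$ differs from $\mu$ in exactly one part by exactly one unit, precisely one of the three parts of $\lambda^{(d-2)}$ has parity $1-p$ while the other two have parity $p$. Hence the row into which the final box is placed is uniquely determined by $\lambda^{(d-2)}$: it must be the row whose parity differs from that of the other two. Consequently at most one of the potential final moves is compatible with the parity condition on $\mu$, and combined with at most three choices at each of the first $d-2$ steps this gives $r(\lambda) \le 3^{d-2}$.

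The argument is short and elementary; the point to spot is the parity rigidity of the last move, which converts the naive bound $3^{d-1}$ into $3^{d-2}$. I do not foresee any serious obstacle beyond this observation, and it even suggests that substantial further improvement is possible by analysing how many of the first $d-2$ steps actually admit all three choices.
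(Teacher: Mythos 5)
Your proof is correct and follows essentially the same route as the paper: both encode a chain by the sequence of rows in which boxes are added and use the parity condition on $\mu$ to save a factor of $3$ at the tail of the chain. The only (harmless) difference is in packaging: the paper shows the pair of last two moves admits at most $3$ joint possibilities depending on the parities of $\lambda^{(d-3)}$, whereas you observe more directly that the final move alone is forced by the parities of $\lambda^{(d-2)}$ (or impossible), which is a slightly cleaner way to obtain the same bound $3^{d-2}\cdot 1=3^{d-3}\cdot 3$.
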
 

\begin{proof} 
Associate to a standard skew tableau $T=\lambda^{(0)}\subset\lambda^{(1)}\subset\dots\subset\lambda^{(d-1)}$ of shape $\mu/\lambda$, where $\mu=(\mu_1,\mu_2,\mu_3)\vdash m+d-1$ and $\mu_1\equiv\mu_2\equiv\mu_3\textrm{ modulo }2$ 
the function $f_T:\{1,\dots,d-1\}\to \{1,2,3\}$, which maps  $j\in \{1,\dots,d-1\}$ to the unique $i\in\{1,2,3\}$ such that the $i$th component of  the partition $\lambda^{(j)}$ is $1$ greater than the $i$th component of $\lambda^{(j-1)}$. 
The assignment $T\mapsto f_T$ is obviously an injective map from the set of standard skew tableaux of shape $\mu/\lambda$ into the set of functions $\{1,\dots,d-1\}\to \{1,2,3\}$. We claim that at most $3^{d-2}$ functions are contained in the image of this map. Indeed, if the three parts of $\lambda^{(d-3)}$ have the same parity, then 
$(f_T(d-2),f_T(d-1))\in \{(1,1),(2,2),(3,3)\}$, since the three parts of $\mu=\lambda^{(d-1)}$ must have the same parity. If the three parts of $\lambda^{(d-3)}$ do not have the same parity, say 
the first two components of $\lambda^{(d-3)}$ have the same parity, and the third part has the opposite parity, then 
 $(f_T(d-2),f_T(d-1))\in \{(1,2),(2,1)\}$. 
Hence $r(\lambda)$ is not greater than $3$-times the number of functions from a $(d-3)$-element set to a $3$-element set. 
Thus  $r(\lambda)\le 3^{d-2}$. 
\end{proof} 

\subsection{Proof of Theorem~\ref{thm:main}} 
For $d\ge 3$ the statement follows from Corollary~\ref{cor:m(lambda)<r(lambda)} and Corollary~\ref{cor:3^d}. 
For the cases $d\le 3$ see Remark~\ref{remark:1} (i). 

\section{A lower bound} 

\begin{proposition}\label{prop:lowerbound} 
For $d\ge 2$ we have the equality 
\[\mathrm{mult}_{(d-1,1)}(\frak{sl}_2(\mathbb{C}),\rho^{(d)})=d-1.\] 
\end{proposition}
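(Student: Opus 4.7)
The plan is to prove both inequalities $\mathrm{mult}_{(d-1,1)}\le d-1$ and $\mathrm{mult}_{(d-1,1)}\ge d-1$. For the upper bound, observe that the multilinear component $P_d$, with basis $\{x_{\pi(1)}\cdots x_{\pi(d)}:\pi\in S_d\}$ permuted simply transitively by $S_d$, is isomorphic to the regular representation $\mathbb{C}[S_d]$. Hence every irreducible character $\chi^\lambda$ appears in $P_d$ with multiplicity $\dim\chi^\lambda$, so $\mathrm{mult}_{(d-1,1)}(P_d)=d-1$, and consequently the $S_d$-equivariant quotient $M:=P_d/(I(\mathfrak{sl}_2(\mathbb{C}),\rho^{(d)})\cap P_d)$ satisfies $\mathrm{mult}_{(d-1,1)}(M)\le d-1$.

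For the lower bound, let $S_{d-1}\subset S_d$ be the stabilizer of the index $d$. Since $\mathrm{Ind}_{S_{d-1}}^{S_d}\mathbf{1}\cong\chi^{(d)}\oplus\chi^{(d-1,1)}$, Frobenius reciprocity gives $\mathrm{mult}_{(d-1,1)}(M)=\dim M^{S_{d-1}}-\dim M^{S_d}$. A basis of the $d$-dimensional space $P_d^{S_{d-1}}$ is given by
\[
p_k:=\sum_{\sigma\in S_{d-1}}x_{\sigma(1)}\cdots x_{\sigma(k-1)}\,x_d\,x_{\sigma(k)}\cdots x_{\sigma(d-1)},\quad k=1,\dots,d,
\]
while $P_d^{S_d}=\mathbb{C}(p_1+\cdots+p_d)$ is one-dimensional. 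Therefore it suffices to prove that the images $\bar p_1,\dots,\bar p_d$ remain linearly independent in $M$; then $\dim M^{S_{d-1}}=d$ and $\dim M^{S_d}=1$ (since $\sum\bar p_k$ is automatically nonzero), giving $\mathrm{mult}_{(d-1,1)}(M)=d-1$.

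To establish this linear independence I would use evaluations. Setting $A_1=\cdots=A_{d-1}=X$ and $A_d=Y$ for $X,Y\in\mathfrak{sl}_2(\mathbb{C})$ yields $p_k\mapsto(d-1)!\,\rho^{(d)}(X)^{k-1}\rho^{(d)}(Y)\rho^{(d)}(X)^{d-k}$. Taking $X=h$, so that $\rho^{(d)}(h)$ is diagonal with the $d$ distinct eigenvalues $\lambda_i=d+1-2i$, a hypothetical relation $\sum_kc_k\bar p_k=0$ translates, at each $(i,j)$-entry with $\rho^{(d)}(Y)_{ij}\ne 0$, into the vanishing of the homogeneous degree-$(d-1)$ polynomial $P(u,v):=\sum_kc_ku^{k-1}v^{d-k}$ at the projective point $[\lambda_i:\lambda_j]\in\mathbb{P}^1$. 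Letting $Y$ run over $\{e,f,h\}$ produces the projective points $[\lambda_i:\lambda_{i+1}]$, $[\lambda_{i+1}:\lambda_i]$, and $[1:1]$; combined with further evaluations in which $A_1,\dots,A_{d-1}$ are not all equal and with Lemma~\ref{lemma:products of length d} (which lets products $\rho^{(d)}(B_1)\cdots\rho^{(d)}(B_{d-1})$ effectively play the role of $Y$), one accumulates strictly more than $d-1$ projectively distinct vanishing points for $P$, forcing $P\equiv 0$ and hence $c_1=\cdots=c_d=0$. The main obstacle is precisely this combinatorial accounting: a single specialization with $X=h$ only yields $d-1$ constraints on $P$ (the points $[\lambda_i:\lambda_{i+1}]$ and $[\lambda_{i+1}:\lambda_i]$ coincide in $\mathbb{P}^1$ via the involution $\lambda_i=-\lambda_{d+1-i}$), leaving a one-parameter ambiguity that must be killed by varying the spectator arguments or, alternatively, by analyzing the image of each $p_k$ in the invariant ring $\mathcal{O}[\mathfrak{sl}_2(\mathbb{C})^{2d-1}]^{\mathfrak{sl}_2(\mathbb{C})}$ under the embedding $\bar\iota$ of Corollary~\ref{cor:res-iota}.
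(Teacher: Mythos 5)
Your reduction of the problem to the linear independence of $\bar p_1,\dots,\bar p_d$ in $M$ is correct, and your route is genuinely different from the paper's: the paper works in $F_2$ with the $\mathrm{GL}_2(\mathbb{C})$-highest weight vectors $w_k=x_1^k[x_1,x_2]x_1^{d-2-k}$ and proves their evaluations at $(h,e)$ are linearly independent via a Vandermonde determinant, whereas you use the $S_{d-1}$-orbit sums in $P_d$ together with Frobenius reciprocity (the two are essentially polarized versions of one another). Your upper bound via the regular representation is also fine. The problem is that you stop short exactly at the step that needs proving, and the hesitation you express there rests on two errors. First, the repair you propose is illegitimate: for a \emph{weak} identity the variables may only be evaluated at elements of $\rho^{(d)}(\mathfrak{sl}_2(\mathbb{C}))$, so you cannot let a product $\rho^{(d)}(B_1)\cdots\rho^{(d)}(B_{d-1})$ ``play the role of $Y$'' --- such a product need not lie in $\rho^{(d)}(\mathfrak{sl}_2(\mathbb{C}))$, and Lemma~\ref{lemma:products of length d} is of no use for that purpose. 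Likewise, evaluations with $A_1,\dots,A_{d-1}$ not all equal do not translate into conditions on your binary form $P$ at all.

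Second, and more importantly, no repair is needed: your own list of points already suffices, and there is no ``one-parameter ambiguity.'' With $X=h$ and $Y=e$ you get the $d-1$ points $[\lambda_i:\lambda_{i+1}]$, $i=1,\dots,d-1$; these are pairwise distinct in $\mathbb{P}^1$ because $\lambda_{i+1}=\lambda_i-2$ and $t\mapsto t/(t-2)$ is a M\"obius transformation, hence injective on $\mathbb{P}^1$ (you are right that $Y=f$ reproduces the same set). But $Y=h$ contributes the additional point $[1:1]$, since some diagonal entry of $\rho^{(d)}(h)$ is nonzero for every $d\ge 2$, and $[1:1]$ is never of the form $[t:t-2]$. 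That gives $d$ distinct zeros in $\mathbb{P}^1$ of the binary form $P$ of degree $d-1$, forcing $P\equiv 0$ and hence $c_1=\cdots=c_d=0$. Your count was off by one because you overlooked that $[1:1]$ is a genuinely new point. With that observation inserted, your proof is complete, and at bottom it is the same Vandermonde-type computation as the paper's (there the commutator $[x_1,x_2]$ lowers the degree of the relevant binary form to $d-2$, so only the $d-1$ points coming from the single substitution $(h,e)$ are needed).
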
 

\begin{proof} 
For $k=0,1,\dots,d-2$ consider the element 
\[w_k:=x_1^k[x_1,x_2]x_1^{d-2-k}\in \mathbb{C}\langle x_1,x_2 \rangle=F_2.\] 
These elements are $\mathrm{GL}_2(\mathbb{C})$-highest weight vectors with weight $(d-1,1)$, hence each generates an irreducible $\mathrm{GL}_2(\mathbb{C})$-submodule isomorphic to $W_{(d-1,1)}$ (see the proof of Proposition~\ref{prop:Sn-character} for the notation $W_{\lambda}$: it is the polynomial $\mathrm{GL}_2(\mathbb{C})$-module with highest weight $\lambda=(\lambda_1,\lambda_2)$). Moreover, they are linearly independent modulo the ideal  
$I(\mathfrak{sl}_2(\mathbb{C}),\rho^{(d)})$: indeed, make the substitution $x_1\mapsto \rho(h)$, $x_2\mapsto \rho(e)$. 
Then we get  
\begin{align*}w_k(\rho(h),\rho(e))&= 
(\sum_{i=1}^d(d+1-2i)E_{i,i})^k\cdot (2\sum_{i=1}^{d-1}iE_{i,i+1})\cdot (\sum_{i=1}^d(d+1-2i)E_{i,i})^{d-2-k}
\\ &=2\sum_{i=1}^{d-1}i(d+1-2i)^k(d-1-2i)^{d-2-k}E_{i,i+1}. 
\end{align*} 
Denote by $Z=(Z_{i,j})_{i,j=1}^{d-1}$ the $(d-1)\times (d-1)$ matrix whose $(i,k+1)$ entry is the $(i,i+1)$-entry of 
$ w_k(\rho(h),\rho(e))$ (i.e. the coefficient of $E_{i.i+1}$ on the right hand side of the above equality). 
If $i\neq \frac{d-1}{2}$, then   
\[Z_{i,k+1}=2(d-1-2i)^{d-2} \cdot \left(\frac{d+1-2i}{d-1-2i}\right)^k.\]
Thus when $d$ is even, $Z$ is obtained from a Vandermonde matrix via multiplying each row by a non-zero integer. 
Since the numbers $\frac{d+1-2i}{d-1-2i}$, $i=1,\dots,d-1$  are distinct, we conclude that $\det(Z)\neq 0$. 
When $d=2f-1$ is odd, the $(f-1)$th row of $Z$ is  
\[(0,\dots,0,2(f-1)2^{d-2}).\] 
Expand the determinant of $Z$ along this row; the $(d-2)\times (d-2)$ minor of $Z$ obtained by removing the 
$(f-1)th$ row and the last column of $Z$ is again obtained from a Vandermonde matrix by multiplying each row by a non-zero integer. So $\det(Z)$ is non-zero also when $d$ is odd. 
This shows that the elements  $w_k(\rho(h),\rho(e))$, $k=0,1,\dots,d-2$ are linearly independent in $\mathbb{C}^{d\times d}$. Consequently, no non-trivial linear combination of   $w_0,w_1,\dots,w_{d-2}$ belongs to 
$I(\mathfrak{sl}_2(\mathbb{C}),\rho^{(d)})$. 
It follows that $F_2/(I(\mathfrak{sl}_2(\mathbb{C}),\rho^{(d)})\cap F_2)$ contains the 
irreducible $\mathrm{GL}_2(\mathbb{C})$-module $W_{(d-1,1)}$ with multiplicity $\ge d-1$. 
This multiplicity is in fact equal to $d-1$, because $d-1$ is the multiplicity of $W_{(d-1,1)}$ as a summand in 
$F_2$. Recall finally that for $\lambda=(\lambda_1,\lambda_2)\vdash m$, the multiplicity of $\chi^{\lambda}$ in the cocharacter sequence coincides with the multiplicity of $W_{\lambda}$ in 
$F_2/(I(\mathfrak{sl}_2(\mathbb{C}),\rho^{(d)})\cap F_2)$. 
\end{proof}


\end{document}